\documentclass{article}     

\usepackage{amsfonts}
\usepackage{amsmath}
\usepackage{graphicx}
\usepackage{amsfonts} \usepackage{amsmath, amsthm}
\usepackage{epsfig} \usepackage{verbatim}
\usepackage{amssymb, amsthm, amsmath, bm, tikz, enumerate, color}
\usepackage{cite, authblk}

\topmargin0in
\newtheorem{theorem}{Theorem}

\newtheorem{definition}{Definition}

\newtheorem{lemma}{Lemma}

\newtheorem{proposition}{Proposition}

\newcounter{tmp}

\title
{Logarithmic scaling of planar random walk's local times}

\author[*]{P\'{e}ter N\'{a}ndori}
\author[**]{Zeyu Shen}
\affil[*]{Department of Mathematics, University of Maryland}
\affil[**]{Courant Institute, New York University}
\date{\vspace{-5ex}}

\begin{document}

\maketitle

\begin{abstract}
We prove that the local time process of a planar simple random walk, when time is scaled logarithmically, converges to a 
non-degenerate pure jump process. The convergence takes place in the Skorokhod space with
respect to the $M1$ topology and fails to hold in the $J1$ topology.
\footnote{{\it 2010 Mathematics Subject Classification}. Primary 60F17; Secondary 60J55\\
{\it Key words and phrases}. Random walk, local time, functional limit theorem}
\end{abstract}

\section{Introduction}

Consider a simple random walk's trajectory on $\mathbb Z$ up to time $n$. 
Its local time at the origin scales with $\sqrt n$; furthermore
the local time process, rescaled by $\sqrt n$, converges to the Brownian motion's local time process at zero. 
One can also explore further similarities between the two local time
processes; see for instance the "Hausdorff dimension" and "density" in \cite{BF92}.\\
The naive implementation of 
the previous idea in two dimensions is much less fruitful. The local time at the origin scales with $\log n$, 
but the local time process, rescaled by $\log n$, converges to a (random) constant function. In other
words, no interesting properties of the random walk's local time are seen in the limit. The present paper suggests
another viewpoint to this local time process, namely the logarithmic scaling of time.\\
In Section 2, we give the basic definitions. Section 3 contains the main 
theorem, its proof is in Section 4.
Section 5 demonstrates on an example that the discrete and continuous 
local time processes share some interesting features. Namely, we compute the distribution of the second
longest excursion. Finally some remarks are given in Section 6.

\section{Definitions}

We work with simple random walk in the plane, i.e. a stochastic process defined by $S_0=(0,0)$, $S_n=\sum_{i=1}^n \xi_i$, where
$\xi_i$ are independent, identically distributed with
$$ P(\xi_i=(0,1)) = P(\xi_i=(0,-1)) = P(\xi_i=(1,0)) =P(\xi_i=(-1,0)) =1/4.$$

The local time at the origin is defined by
$$ N_n = \sum_{j=1}^n 1_{\{ S_j = 0\}}.$$

We will use the following notations:
\begin{eqnarray*}
 &\Rightarrow& \text{weak convergence}\\
 &EXP& \text{an exponentially distributed random variable with mean $1$}\\
 &UNI& \text{a uniformly distributed random variable on $[0,1]$}
\end{eqnarray*}

\section{The main result}

Our main result is 

\begin{theorem}
 \label{thm1}
Let us define the stochastic process $(L_n(t))_{t \in [0,1]}$ by
$$ L_n( \log k / \log n) = \frac{N_{k}}{\log n} \quad \text{for } 1 \leq k \leq n$$
and linear interpolations. 
Then there is a non-degenerate process $\mathcal J$ such that
$$ L_n(t) \Rightarrow \mathcal J (t) $$ 
with respect to the $M1$ topology on the space $D[0,1]$
as $n \rightarrow \infty$.
\end{theorem}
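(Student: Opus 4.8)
The plan is to pass from the walk to its excursions from the origin and reduce everything to a record/extreme-value computation. Let $0=\tau_0<\tau_1<\tau_2<\cdots$ be the successive return times to the origin and $\delta_i=\tau_i-\tau_{i-1}$ the i.i.d. excursion lengths. The first input I would record is the tail asymptotics $\Prob(\delta_1>m)\sim \pi/\log m$, which I expect to derive from the local central limit theorem $\Prob(S_{2k}=0)\sim 1/(\pi k)$ together with the renewal relation linking the Green's function $\sum_{j\le m}\Prob(S_j=0)\sim \pi^{-1}\log m$ to the return-time tail (this is the Erd\H{o}s--Taylor circle of ideas). Since $N_k=\#\{i:\tau_i\le k\}$, at the sample points $t=\log k/\log n$ we have $L_n(t)=N_{n^t}/\log n=\frac{1}{\log n}\#\{i:\log\tau_i\le t\log n\}$, up to the linear interpolation.

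The key structural simplification is the one-big-jump principle. Because $\log\delta_i$ has a Pareto (regularly varying, index $1$) tail $\Prob(\log\delta_1>s)\sim\pi/s$, the partial sum is dominated by its maximum: I would show $\log\tau_m=\max_{i\le m}\log\delta_i+o(\log n)$ uniformly over the relevant range $m=O(\log n)$, via a truncation/second-moment estimate bounding the contribution of the non-maximal excursions by $o(\log n)$. Consequently $\#\{i:\log\tau_i\le t\log n\}=\min\{i:\log\delta_i> t\log n\}-1+o(\log n)$. Setting $Y_i=\log\delta_i/\log n$, this exhibits $L_n$, up to a uniform $o(1)$, as a functional of the planar configuration $\Pi_n=\sum_i\delta_{(Y_i,\,i/\log n)}$, namely $L_n(t)\approx\inf\{x:(s,x)\in\Pi_n,\ s>t\}$. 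I would then prove $\Pi_n\Rightarrow\Pi$, a Poisson point process on $(0,\infty)^2$ with intensity $\pi\,s^{-2}\,ds\,dx$; this is a routine consequence of independence and the tail $\Prob(Y_i>y)\sim\pi/(y\log n)$ (convergence of intensity measures plus absence of clustering). This identifies the candidate limit $\mathcal J(t)=\inf\{x:(s,x)\in\Pi,\ s>t\}$, a non-decreasing pure-jump process with $\Prob(\mathcal J(t)>h)=e^{-\pi h/t}$, so that $\mathcal J(1)$ has the Erd\H{o}s--Taylor law $\Prob(\mathcal J(1)>h)=e^{-\pi h}$.

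To upgrade point-process convergence to $M1$-convergence of the processes, I would show that the map $\Phi$ sending a locally finite configuration to the function $t\mapsto\inf\{x:(s,x),\ s>t\}$ is continuous, at $\Pi$-almost every configuration, into $(D[0,1],M1)$. Since both $L_n$ and $\mathcal J$ are non-decreasing, $M1$ is the natural topology: for monotone functions $M1$-convergence reduces to convergence of the completed graphs, and the graph of $\Phi$ is read off from the positions of the \emph{lower-right envelope} of the configuration, i.e. the points admitting no other point both below and to the right. The full-measure continuity set of $\Phi$ consists of configurations with no two points sharing a coordinate and none on the axes. Combining this with the uniform $o(1)$ errors from the one-big-jump reduction and from the interpolation, the continuous mapping theorem yields $L_n\Rightarrow\mathcal J$ in $(D[0,1],M1)$. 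I expect this to be the main obstacle: making the one-big-jump error uniform in $t\in[0,1]$ and checking it does not perturb the $M1$ graph, together with $M1$-tightness. This should require an explicit $M1$-oscillation bound controlling the probability that $L_n$ makes a macroscopic increase spread over a non-vanishing $t$-window, guaranteeing that the steep rises concentrate (on the $t$-axis) to the record values $Y_i$.

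Finally, the failure of $J1$-convergence is soft and requires no fine analysis. Each $L_n$ is continuous, being defined by linear interpolation, so $L_n\in C[0,1]$ almost surely. The subspace $C[0,1]$ is closed in $(D[0,1],J1)$, and the $J1$-distance from any continuous function to a function possessing a jump of size $a$ is bounded below by $a/2$. Hence, if $L_n\Rightarrow\mathcal J$ held in $J1$, the portmanteau theorem applied to the closed set $C[0,1]$ would force $\mathcal J\in C[0,1]$ almost surely. This is false: $\mathcal J$ is a non-degenerate pure-jump process, and the lower-right envelope of $\Pi$ has macroscopic gaps with positive probability, so $\Prob(\mathcal J \text{ has a jump exceeding }\eps)>0$ for small $\eps$. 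This contradiction shows $L_n\not\Rightarrow\mathcal J$ in $J1$, completing the proof.
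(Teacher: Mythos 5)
Your proposal is correct in outline but takes a genuinely different route from the paper. The paper never introduces the excursion decomposition or a point process: it proves convergence of finite-dimensional distributions directly, combining the Erd\H{o}s--Taylor law for $N_{\lfloor n^s\rfloor}/\log n$ with the fact that $\log\|S_{\lfloor n^s\rfloor}\|/\log n\to s/2$ in probability (Lemma \ref{jointconv}), the hitting-time asymptotics from a distant point $\log\tau_v/(2\log\|v\|)\Rightarrow 1/UNI$ (Lemma \ref{lemma2dim}, from formula (2.16) of \cite{ET60}), and the strong Markov property at the first return $\sigma$ after time $n^s$, which gives $P(\log\sigma/\log n<u)\to 1-s/u$ and hence the joint law of $(L_n(s),L_n(t))$; tightness is then dispatched by Whitt's Theorem 12.12.3, since for non-decreasing paths the interior $M1$-oscillation vanishes identically and only the endpoints need control. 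Your route instead uses the i.i.d.\ excursion lengths from time $0$, the tail $P(\delta_1>m)\sim\pi/\log m$, and Poisson convergence of the configuration $(\log\delta_i/\log n,\,i/\log n)$, realizing $\mathcal J$ as a lower-envelope functional. This buys an explicit construction of $\mathcal J$ (making the consistency of Definition \ref{def1}, which the paper omits, and properties (A)--(C) transparent), it re-derives Theorem \ref{thm:2dimw} rather than invoking it for each increment, and your $J1$ argument (continuity of $L_n$, closedness of $C[0,1]$ in $J1$, portmanteau) is an actual proof of the failure of $J1$-convergence where the paper only gives a heuristic remark; the price is the extra machinery of point-process convergence and a.e.\ continuity of the envelope map. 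Two simplifications you should adopt: the one-big-jump step needs no truncation or second-moment estimate, since $\max_{i\le m}\delta_i\le\tau_m\le m\max_{i\le m}\delta_i$ and the relevant $m$ is at most $\log^2 n$ with high probability, so the error in $\log\tau_m$ is $O(\log\log n)=o(\log n)$; and the $M1$-oscillation bound you anticipate as the main obstacle is unnecessary, because monotonicity of $L_n$ reduces $M1$-tightness to controlling $L_n(\delta)$ and $L_n(1)-L_n(1-\delta)$, exactly the observation the paper exploits.
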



First we give the definition of the process $\mathcal J$ of Theorem \ref{thm1}. 
\begin{definition}
\label{def1}
 $\mathcal J $ is a c\`adl\`ag
process on $[0,1]$ 
of independent increments with $\mathcal J (0) = 0$ and for any $0 \leq s \leq t \leq 1$,
\begin{enumerate}
 \item $ \mathcal J(t) - \mathcal J(s) = 0$
with probability $\frac{s}{t}$ and
 \item $ \mathcal J(t) - \mathcal J(s) = \xi$ with probability $(1-\frac{s}{t})$, where
$\xi$ exponentially distributed with mean $\frac{t}{\pi}$.
\end{enumerate}
\end{definition}

We omit the proof of the consistency of Definition \ref{def1} 
since it is an elementary computation.
The following are simple consequences of the definition
\begin{enumerate}
 \item[(A)] $\mathcal J$ is a non-decreasing pure jump c\`adl\`ag process with countably many jumps accumulating at zero.
 \item[(B)] For any fixed $t \in [0,1)$, the following is true. 
$\mathcal J$ is constant on the time interval $[t,1]$ with probability $t$. 
If it is not constant, then the first jump after time $t$ has conditional density
$\frac{t}{(1-t) x^2}$ for $x \in [t,1]$.
   \item[(C)] The last jump of $\mathcal J$ is uniformly distributed on $[0,1]$.
\end{enumerate}

Before proving Theorem \ref{thm1} let us recall the following theorem of Erd\H{o}s and Taylor \cite{ET60}:

\begingroup
\setcounter{tmp}{\value{theorem}}
\setcounter{theorem}{0} 
\renewcommand\thetheorem{\Alph{theorem}}
\begin{theorem}
\label{thm:2dimw}
If $S_j$ is a $2$ dimensional SSRW with $S_0=(0,0)$, then as $n \rightarrow \infty$,
$$ \frac{1}{\log n} N_n \Rightarrow  \frac{1}{\pi} EXP. $$
\end{theorem}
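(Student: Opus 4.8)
The plan is to prove the convergence by the method of moments, exploiting the renewal/Markov structure of the returns to the origin. The one analytic input is the two-dimensional local central limit theorem: writing $u_j = P(S_j = 0)$, only even times contribute, and evaluating the inversion integral $u_{2m} = (2\pi)^{-2}\int_{[-\pi,\pi]^2}\left(\tfrac12(\cos\theta_1+\cos\theta_2)\right)^{2m}\,d\theta$ by stationary phase — collecting contributions from both $\theta=(0,0)$ and the antipodal point $\theta=(\pi,\pi)$, where the characteristic function equals $-1$ so that its $2m$-th power again equals $1$ — yields $u_{2m}\sim \frac{1}{\pi m}$. Summing over even times gives $\sum_{j=1}^{m} u_j \sim \frac{1}{\pi}\log m$, which already fixes the normalization $\log n$ and the mean $\frac{1}{\pi}$ of the limit.

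Next I would compute the factorial moments of $N_n$, writing $(N_n)_r = N_n(N_n-1)\cdots(N_n-r+1)$. By the Markov property, for $1\le j_1<\dots<j_r\le n$ one has $P(S_{j_1}=0,\dots,S_{j_r}=0)=u_{j_1}u_{j_2-j_1}\cdots u_{j_r-j_{r-1}}$, so after the change of variables $i_1=j_1$, $i_\ell = j_\ell - j_{\ell-1}$ the identity $E[(N_n)_r] = r!\sum_{i_1,\dots,i_r\ge 1,\ i_1+\dots+i_r\le n} u_{i_1}\cdots u_{i_r}$ holds. The target is $E[(N_n)_r]\sim \frac{r!}{\pi^r}(\log n)^r$, which is precisely $E\big[(\tfrac1\pi EXP)^r\big]$, since an $EXP$ variable has $r$-th moment $r!$.

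The only step needing care is showing that the linear constraint $i_1+\dots+i_r\le n$ does not change the leading asymptotics, and this I would resolve by a clean sandwich rather than a delicate boundary-layer estimate. Dropping the constraint to $\max_\ell i_\ell \le n$ gives the upper bound $\big(\sum_{i\le n}u_i\big)^r\sim(\tfrac1\pi\log n)^r$; imposing instead the stronger $\max_\ell i_\ell\le n/r$, which forces $\sum_\ell i_\ell\le n$, gives the lower bound $\big(\sum_{i\le n/r}u_i\big)^r\sim(\tfrac1\pi\log(n/r))^r$. Since $\log(n/r)\sim\log n$ for fixed $r$, both bounds are asymptotic to $(\tfrac1\pi\log n)^r$, so the constrained sum is squeezed to the same value and $E[(N_n)_r]\sim\frac{r!}{\pi^r}(\log n)^r$ as desired.

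Finally I would invoke the method of moments. The limiting moments $\frac{r!}{\pi^r}$ satisfy Carleman's condition (by Stirling, $\big((2r)!/\pi^{2r}\big)^{-1/(2r)}\sim e\pi/(2r)$, whose sum diverges), so the exponential law is moment-determinate; combined with the fact that ordinary and falling-factorial moments of $N_n$ agree to leading order because $N_n\to\infty$, the convergence $E[(N_n/\log n)^r]\to E[(\tfrac1\pi EXP)^r]$ for every $r$ gives $\frac{1}{\log n}N_n\Rightarrow \frac1\pi EXP$. I expect the main obstacle to be not the combinatorics but the local limit theorem itself, namely getting the constant $\frac1\pi$ exactly right by accounting for both critical points of the characteristic function; everything downstream is elementary given it. As a cross-check and alternative route, one can pass through generating functions: from $U(s)=\sum u_j s^j\sim -\frac1\pi\log(1-s)$ and the renewal identity $U(s)=1/(1-F(s))$ one gets $1-F(s)\sim \pi/\log\frac{1}{1-s}$, i.e.\ a first-return time with slowly varying tail $P(\tau>n)\sim \pi/\log n$, from which the Laplace transform of $N_n/\log n$ converges to $\pi/(\pi+\lambda)$, the transform of $\frac1\pi EXP$.
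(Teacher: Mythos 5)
Your argument is correct, but be aware that the paper contains no proof of this statement at all: it is recalled as Theorem A with a citation to Erd\H{o}s and Taylor \cite{ET60}, and what you have written is in essence a reconstruction of the original Erd\H{o}s--Taylor moment argument. All the key steps check out. The asymptotics $u_{2m}\sim \frac{1}{\pi m}$ is right, and your care about the antipodal critical point $(\pi,\pi)$ is warranted since each of the two critical points contributes $\frac{1}{2\pi m}$; an even quicker route to the same constant is to rotate the lattice by $45$ degrees, which decouples the walk into two independent one-dimensional simple walks and gives the exact formula $u_{2m}=\binom{2m}{m}^2 4^{-2m}\sim\frac{1}{\pi m}$. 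The factorial-moment identity $E[(N_n)_r]=r!\sum_{i_1+\dots+i_r\le n}u_{i_1}\cdots u_{i_r}$ is correct, and your sandwich $\bigl(\sum_{i\le n/r}u_i\bigr)^r\le r!^{-1}E[(N_n)_r]\le\bigl(\sum_{i\le n}u_i\bigr)^r$ is exactly the classical trick for disposing of the simplex constraint, valid here because all $u_i\ge 0$ and $\log(n/r)\sim\log n$ for fixed $r$. Two points deserve one more line each, though neither is a gap: first, the passage from factorial to ordinary moments should be justified by the Stirling-number expansion $E[N_n^r]=\sum_{k=0}^{r}S(r,k)E[(N_n)_k]=E[(N_n)_r]+O((\log n)^{r-1})$, which makes precise your remark that the two agree to leading order; second, in your cross-check via generating functions, the renewal identity $U(s)=1/(1-F(s))$ requires the convention $U(s)=\sum_{j\ge 0}u_js^j$ with $u_0=1$. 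Your Carleman computation is correct and the exponential law is indeed moment-determinate, so Fr\'echet--Shohat closes the argument. As for what each approach buys: the paper simply imports the theorem as an input to its functional limit theorem, while your self-contained proof makes the constant $\frac{1}{\pi}$ transparent; your alternative sketch via $1-F(s)\sim\pi/\log\frac{1}{1-s}$ is the Darling--Kac route \cite{DK57} and, as a bonus, recovers the tail asymptotics $P(\tau>n)\sim\pi/\log n$ of the first return time, which is precisely the ingredient (formula (2.16) of \cite{ET60}) the paper uses in its Lemma \ref{lemma2dim}.
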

\endgroup


\section{Proof of Theorem \ref{thm1}}

We have to prove the convergence of finite dimensional distributions and the tightness.


In order to keep notations simple, we only show the convergence of two dimensional marginals. 
The argument works for higher dimensions exactly the same way.
Let us thus fix $0<s<t<1$. Then, by Theorem \ref{thm:2dimw}, we have both $L_n(s) \Rightarrow \mathcal J (s)$
and $L_n(t) \Rightarrow \mathcal J (t)$ as $n \rightarrow \infty$, but we need to show joint convergence.
We will use the following lemmas

\begin{lemma}
\label{jointconv}
The joint distribution of 
$$\left( \frac{N_{\lfloor n^s \rfloor }}{\log n}, \frac{ \log \| S_{\lfloor n^s \rfloor}\|}{\log n} \right)$$
converges, as $n \rightarrow \infty$ to a pair consisting of $\frac{s}{\pi} EXP$ and the constant $s/2$.
\end{lemma}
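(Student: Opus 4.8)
The plan is to treat the two coordinates separately and then combine them, exploiting the fact that the second marginal has a \emph{deterministic} limit, which makes joint convergence essentially automatic. Throughout write $m = \lfloor n^s \rfloor$, so that $\log m = s\log n + o(1)$ and hence $\log m / \log n \to s$.

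For the first coordinate I would simply invoke the Erd\H{o}s--Taylor theorem (Theorem~\ref{thm:2dimw}) with $m$ in place of $n$, which is legitimate since $m \to \infty$. This gives $N_m/\log m \Rightarrow \frac{1}{\pi} EXP$, and multiplying by the convergent deterministic factor $\log m/\log n \to s$ yields
$$ \frac{N_m}{\log n} = \frac{N_m}{\log m}\cdot\frac{\log m}{\log n} \Rightarrow \frac{s}{\pi} EXP. $$

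For the second coordinate I would prove the stronger statement that $\log \|S_m\|/\log n \to s/2$ \emph{in probability}. Decompose $\log\|S_m\| = \tfrac{1}{2}\log m + R_m$ with $R_m = \log(\|S_m\|/\sqrt m)$; since $\tfrac12(\log m/\log n) \to s/2$, it suffices to show that $R_m$ is tight, so that $R_m/\log n \to 0$ once we divide by $\log n \to \infty$. Tightness of $R_m$ reduces to two tail bounds. The upper tail is immediate from the second-moment identity $E\|S_m\|^2 = m$ and Markov's inequality: $P(\|S_m\| > e^M\sqrt m) \le e^{-2M}$. The lower tail is an anti-concentration estimate $P(\|S_m\| < e^{-M}\sqrt m)$, which I would control with the local central limit theorem: $P(S_m = x) \le C/m$ uniformly in $x$, while the disk of radius $e^{-M}\sqrt m$ contains $O(e^{-2M} m)$ lattice points, so this probability is also $O(e^{-2M})$. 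Both bounds are uniform in $m$, which gives the required tightness.

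Finally I would assemble the joint statement. Since $\log\|S_m\|/\log n$ converges to the constant $s/2$ in probability while $N_m/\log n \Rightarrow \frac{s}{\pi} EXP$, the pair converges jointly to $\bigl(\frac{s}{\pi} EXP,\, s/2\bigr)$; this is the standard fact that distributional convergence of one coordinate together with convergence in probability of the other to a constant implies joint convergence. I expect the only genuinely substantive step to be the lower-tail anti-concentration bound on $\|S_m\|$, for which the local CLT is the natural tool; everything else is either quoted (Erd\H{o}s--Taylor) or routine.
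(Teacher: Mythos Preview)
Your proposal is correct and follows essentially the same route as the paper: the first coordinate by Erd\H{o}s--Taylor, the second coordinate to the constant $s/2$ in probability, and then the Slutsky-type combination for joint convergence. The only difference is cosmetic: the paper appeals directly to the ordinary CLT (since $\|S_m\|/\sqrt m$ converges in law to a variable with no atom at $0$, $\log(\|S_m\|/\sqrt m)$ is automatically tight), so your local-CLT anti-concentration argument, while correct, is stronger than necessary.
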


\begin{proof}
 Note that the first coordinate converges weakly by Theorem \ref{thm:2dimw} and the second coordinate converges
in probability by the central limit theorem
(in fact, it converges almost surely by the law of iterated logarithm). 
It follows that the pair also converges weakly.
\end{proof}

\begin{lemma}
\label{lemma2dim}
 Assuming that $S_0=v \in \mathbb Z^2$, let us denote the first hitting time of the origin by $\tau_v$. Then,
as $\| v\| \rightarrow \infty$,
$$ \frac{\log \tau_v}{2\log \| v\|} \Rightarrow \frac{1}{UNI}.$$
\end{lemma}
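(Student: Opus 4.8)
The plan is to reduce the distributional limit to two tail estimates on $\tau_v$. Writing $r=\|v\|$, I will show that for each fixed $x>1$, with $t=r^{2x}$,
$$ P_v(\tau_v>t)\longrightarrow \frac1x \qquad (r\to\infty), $$
and that $P_v(\tau_v\le r^{2x})\to0$ for every $x<1$. Since $1/UNI$ has survival function $\bar F(x)=\min(1,1/x)$ and every $x\neq1$ is a continuity point, these two facts give $\frac{\log\tau_v}{2\log\|v\|}\Rightarrow 1/UNI$ straight from the definition of convergence in distribution.

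The engine is the potential kernel $a$ of the planar walk: it satisfies $a(0)=0$, is harmonic off the origin, and obeys $a(y)=\frac{2}{\pi}\log\|y\|+\kappa+o(1)$ as $\|y\|\to\infty$ (a classical estimate). Hence $a(S_{k\wedge\tau_v})$ is a bounded martingale up to the exit time $T_R$ of the ball $B_R=\{\|y\|<R\}$, and optional stopping at $\tau_v\wedge T_R$, using $a(0)=0$ and $\|S_{T_R}\|=R+O(1)$, gives the gambler's-ruin estimate
$$ P_v(T_R<\tau_v)=\frac{a(v)}{\tfrac2\pi\log R+O(1)}\longrightarrow\frac{\log r}{\log R} $$
whenever $r,R\to\infty$ with $\log r/\log R\to c$.

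I then relate the spatial event $\{\tau_v<T_R\}$ to the temporal event $\{\tau_v\le t\}$ by a two-radius sandwich. For the upper bound on $P_v(\tau_v>t)$ take $R_-=t^{1/2-\epsilon}$: since $E_vT_{R_-}\le CR_-^2\ll t$, Markov gives $P(T_{R_-}>t)\to0$, and on $\{\tau_v<T_{R_-}\}\cap\{T_{R_-}\le t\}$ one has $\tau_v\le t$, whence $P_v(\tau_v>t)\le \frac{\log r}{\log R_-}+o(1)=\frac{1}{(1-2\epsilon)x}+o(1)$. For the lower bound take $R_+=t^{1/2+\epsilon}$: Doob's $L^2$ maximal inequality on the coordinate martingales gives $P(T_{R_+}\le t)=P(\max_{k\le t}\|S_k\|\ge R_+)\le Ct/R_+^2\to0$, and $\{\tau_v\le t\}\subseteq\{\tau_v<T_{R_+}\}\cup\{T_{R_+}\le t\}$, so $P_v(\tau_v>t)\ge\frac{1}{(1+2\epsilon)x}-o(1)$. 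Letting $\epsilon\downarrow0$ after $r\to\infty$ pins the limit at $1/x$. The residual support claim $P_v(\tau_v\le r^{2x})\to0$ for $x<1$ follows from the crude bound $P_v(\tau_v\le s)\le\sum_{k\le s}P_0(S_k=v)$ together with the local CLT tail $P_0(S_k=v)\le Ck^{-1}e^{-c r^2/k}$, which is superpolynomially small throughout $k\le s=r^{2x}$.

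The main obstacle is exactly this passage from the spatial gambler's-ruin probability to the clock-time statement: $\{\tau_v<T_R\}$ and $\{\tau_v\le R^2\}$ are genuinely different events, since the walk may leave $B_R$ and hit $0$ only much later, so one cannot simply put $R=\sqrt t$. The sandwich with $R_\pm=t^{1/2\pm\epsilon}$ circumvents this, at the price of the two exit-time estimates showing that the gap between exiting $B_{R_\pm}$ and reaching clock-time $t$ is negligible; justifying the order of the limits $r\to\infty$ and $\epsilon\downarrow0$, and checking $v\in B_{R_\pm}$ throughout the relevant range of $x$, is the part that must be handled with care.
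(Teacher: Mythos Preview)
Your argument is correct. The gambler's-ruin identity via the potential kernel, the two-radius sandwich with $R_\pm=t^{1/2\pm\epsilon}$, the Markov/Doob control of the discrepancy between $T_{R_\pm}$ and clock time $t$, and the local-CLT tail for the sub-diffusive regime $x<1$ all go through as you describe; the checks that $v\in B_{R_-}$ (since $R_-=r^{x(1-2\epsilon)}\gg r$ for $x>1$ and small $\epsilon$) and that the starting point shift in Doob's inequality is harmless (since $R_+\gg r$) are routine.

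The paper, however, does not argue any of this: its entire proof is a one-line citation of formula~(2.16) in Erd\H{o}s--Taylor \cite{ET60}, which already records the asymptotic tail $P_v(\tau_v>n)$ needed here. So your route is genuinely different. What you gain is a self-contained derivation from standard potential-theoretic tools (the logarithmic asymptotics of $a$ and optional stopping), which makes the result portable to any walk for which the potential-kernel expansion is known; what the paper gains is brevity, at the cost of deferring the substance to the 1960 reference. Either is acceptable, but if you present yours you should cite a source for the expansion $a(y)=\tfrac{2}{\pi}\log\|y\|+\kappa+o(1)$ and for the exit-time moment bound $E_vT_R\le CR^2$, since those are the imported facts replacing the Erd\H{o}s--Taylor formula.
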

\begin{proof}
 This follows from formula (2.16) in \cite{ET60}.
\end{proof}

Now let us condition on the value of $L_n(s)$ and fix a small $\varepsilon >0$. 
By Lemma \ref{jointconv}, 
\begin{equation}
 P \left( \frac{ \log \| S_{\lfloor n^s \rfloor}\|}{\log n} \in 
\left [\frac{s}{2} - \varepsilon, \frac{s}{2} + \varepsilon \right] \right) > 1-\varepsilon
\label{proof:1}
\end{equation}
for $n$ large enough independently of the value of $L_n(s)$.
Le $\sigma$ be the first return to the origin after time $n^s$. 
Then for any $u>s$, (\ref{proof:1}) and Lemma \ref{lemma2dim} imply that
\begin{equation}
 P \left( \frac{\log \sigma}{\log n} < u \right) \rightarrow 1 - s/u,
\label{proof:2}
\end{equation}
as $n \rightarrow \infty$.
Now if $\sigma < n^{t-\delta}$ with some fixed small $\delta$, then by applying the strong Markov property of $S_n$ and 
Theorem \ref{thm:2dimw} we conclude that the 
distribution of $L_n(t) - L_n(s)$ is approximately $\frac{1}{\pi} t EXP$. 
If $\sigma > n^{t}$, then $L_n(t) = L_n(s)$. 
Thus applying (\ref{proof:2}) for $u=t-\delta$ and $u=t$ and finally letting $\delta \rightarrow 0$ proves
the convergence of two dimensional marginals.\\

Now we prove tightness by checking the conditions of Theorem 12.12.3 in \cite{W02}.
Since our processes are non-decreasing, condition (i) is equivalent to the tightness of $L_n(1)$.
By the convergence of one dimensional marginals (i.e. by Theorem \ref{thm:2dimw}) $L_n(1)$ is tight.
Checking condition (ii) is also simple for non-decreasing processes. First note that 
since $L_n$ is supported on non-decreasing functions,
the oscillations inside $(0,1)$ (denoted by $\omega_s$ in formula (4.4) of \cite{W02}) are zero. Thus 
in order to check condition (ii), we only have to control $\bar \nu (x,0,\delta)$
and $\bar \nu (x,1,\delta)$. Thus for non-decreasing processes, condition (ii) is equivalent to the following:
for every positive $\varepsilon$ and $\eta$ there exists some positive $\delta$ such that 
$P(L_n (\delta) \geq \eta) < \varepsilon$ and $P(L_n (1) - L_n (1-\delta) \geq \eta) < \varepsilon$.
Both of these follow from the convergence of two dimensional marginals. 
We have finished the proof of Theorem \ref{thm1}.


\section{The second longest excursion}

In order to illustrate that certain interesting properties of the local time process 
are transparent after going to the limit $\mathcal J$, we investigate the length of the second longest excursion.
It is well known that with probability close to one, the longest excursion is the ongoing one and this excursion
is much longer than the total length of all other excursions (this follows from the invariance principle and the
fact that the planar Brownian motion does not return to the origin). We claim that the
length of the second longest excursion 
is roughly $n^{UNI}$. This is consistent
with consequence (C), i.e. the fact that the last jump of $\mathcal J$ before time $1$ is uniformly distributed on $[0,1]$. 
In this section, we prove the above claim.\\
Let us define $\varsigma_0=0$,
$$\varsigma_l = \min \{ k> \varsigma_{l-1}: S_k =0\} $$
and $L= \max \{ l: \varsigma_l \leq n\}$. Then 
$\rho_l = \varsigma_l - \varsigma_{l-1}$ is the length of the $l$th excursion for $1 \leq l \leq L$, while $n-\varsigma_L$ is the
length of the ongoing excursion at time $n$. As it was mentioned before,
$$ \lim_{n \rightarrow \infty} P \left(\frac{\varsigma_L}{n-\varsigma_L} > \varepsilon \right) = 0 \quad 
\text{for all $\varepsilon >0$.}$$
Now we claim the following
\begin{proposition}
\label{prop1}
As $n \rightarrow \infty$,
$$ \frac{\log \max_{1 \leq l \leq L} \{ \rho_l\} }{ \log n} \Rightarrow UNI.$$
\end{proposition}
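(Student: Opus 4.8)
The plan is to reduce the statement about the longest completed excursion to a statement about the last return time $\varsigma_L$, and then to compute the law of $\varsigma_L$ by a renewal argument. Write $M=\max_{1\le l\le L}\rho_l$. Since $\varsigma_L=\sum_{l=1}^{L}\rho_l$ is a sum of $L$ positive terms whose largest is $M$, one has the deterministic sandwich $M\le \varsigma_L\le L\,M$, hence $0\le \log\varsigma_L-\log M\le \log L$. By Theorem \ref{thm:2dimw} the number of returns $L=N_n$ satisfies $N_n=O_P(\log n)$, so $\log L=O_P(\log\log n)$ and therefore $(\log\varsigma_L-\log M)/\log n\to 0$ in probability. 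Consequently it suffices to prove that $\log\varsigma_L/\log n\Rightarrow UNI$, i.e. that $P(\varsigma_L\le n^u)\to u$ for every fixed $u\in(0,1)$.

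For the key step I would use that, by the strong Markov property, the excursion lengths $\rho_1,\rho_2,\dots$ are i.i.d.\ copies of the first return time $R=\min\{k\ge 1:S_k=0\}$, so that $\varsigma_0=0<\varsigma_1<\varsigma_2<\cdots$ is an ordinary renewal process and $L$ is the index of its last point in $[0,n]$. The event $\{\varsigma_L\le n^u\}$ is exactly the event that the renewal process has no point in $(n^u,n]$; decomposing according to the last renewal before $n^u$ gives
$$ P(\varsigma_L\le n^u)=\sum_{k\ge 0}P(\varsigma_k\le n^u,\ \varsigma_{k+1}>n)=\int_{[0,n^u]}U(dy)\,P(R>n-y), $$
where $U(dy)=\sum_{k\ge0}P(\varsigma_k\in dy)$ is the renewal measure, so that $U([0,x])=1+E[N_x]$.

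To evaluate this I would invoke the two Erd\H{o}s--Taylor estimates: the renewal function $U([0,x])=1+E[N_x]\sim \frac{\log x}{\pi}$ (which follows from Theorem \ref{thm:2dimw} together with uniform integrability of $N_x/\log x$), and the no-return tail $P(R>m)=P(N_m=0)\sim \frac{\pi}{\log m}$ from \cite{ET60}. For $y\le n^u$ with $u<1$ one has $P(R>n-y)/P(R>n)\to 1$ uniformly, since $\log(n-y)=\log n+\log(1-y/n)$ with $y/n\le n^{u-1}\to0$. Hence
$$ P(\varsigma_L\le n^u)\sim P(R>n)\,U([0,n^u])\sim \frac{\pi}{\log n}\cdot\frac{u\log n}{\pi}=u, $$
which is the desired conclusion. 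As a consistency check, letting $u\uparrow 1$ gives $P(\varsigma_L\le n)\to1$, as it must since $\varsigma_L\le n$ always.

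The main obstacle is the rigorous justification of the two asymptotic inputs and of the passage to the limit inside the renewal integral. Concretely, one must establish the uniform integrability needed to upgrade the weak convergence of Theorem \ref{thm:2dimw} to the first-moment asymptotics $E[N_x]\sim \log x/\pi$, confirm the precise constant in the tail estimate $P(R>m)\sim\pi/\log m$, and control the integral uniformly in $y$ over $[0,n^u]$ (the contribution of $y$ near $n^u$, where $P(R>n-y)$ is largest, must be shown not to distort the leading term). Everything else is routine bookkeeping.
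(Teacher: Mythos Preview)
Your argument is correct, and it differs from the paper's in both of its two steps. For the reduction from $\max_l\rho_l$ to $\varsigma_L$ you use the elementary sandwich $M\le\varsigma_L\le LM$ together with $L=N_n=O_P(\log n)$; the paper instead invokes the ``single big jump'' phenomenon for slowly varying tails, claiming that $\max_l\rho_l/\varsigma_L\to 1$ in probability. Your inequality is weaker but entirely sufficient on the $\log$ scale, and it avoids having to justify the single-big-jump principle. For the identification of the law of $\log\varsigma_L/\log n$, the paper simply recycles formula (\ref{proof:2}) from the proof of Theorem \ref{thm1} (which in turn rests on Lemma \ref{lemma2dim} about the first hitting time from a distant point), applied with $u=1$: the event $\{\varsigma_L>n^s\}$ is exactly the event that the first return after time $n^s$ occurs before time $n$, whose probability tends to $1-s$. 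Your renewal computation reaches the same conclusion directly from the Erd\H{o}s--Taylor asymptotics $E[N_x]\sim(\log x)/\pi$ and $P(R>m)\sim\pi/\log m$, without passing through the hitting-time estimate; this makes the argument self-contained but requires the extra inputs you flag (the first-moment asymptotic, not just the distributional limit of Theorem \ref{thm:2dimw}). Either route works; the paper's is shorter because the heavy lifting was already done in Section 4.
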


\begin{proof}
 First, we show that 
\begin{equation}
\label{eq:prop1}
 \frac{\log \varsigma_L}{ \log n} \Rightarrow UNI.
\end{equation}
In order to prove (\ref{eq:prop1}), let us fix some $s \in (0,1)$ and observe that formula (\ref{proof:2}) is applicable with $u=1$, thus
$$ \lim_{n \rightarrow \infty} P (n^s < \varsigma_L) = 1-s,$$
which is equivalent to (\ref{eq:prop1}). 
Now we have $L$ excursions whose total length is $\varsigma_L$. Recall that the length of one excursion
is heavy tailed, namely $P(\rho_1>k) \approx 1/\log k$. In such cases, the longest excursion dominates all the others, i.e.
\begin{equation}
\label{eq:prop2}
  \lim_{n \rightarrow \infty} 
P \left( \frac{ \max_{1 \leq l \leq L} \{ \rho_l\} }{ \varsigma_L} <1- \varepsilon \right) = 0 \quad 
\text{for all $\varepsilon >0$.} 
\end{equation}
The proposition follows from (\ref{eq:prop1}) and (\ref{eq:prop2}).
\end{proof}

\section{Remarks}

\begin{enumerate}
 \item 
Theorem \ref{thm1} is of course true for much wider class of planar random walks then the simple random walk defined here
(with $1/ \pi$ being replaced by some other constant).
Note that the two crucial steps in the proof are Theorem \ref{thm:2dimw} and Lemma \ref{lemma2dim}. 
Analogous statements are clearly true for any non-degenerate planar random walks of finite range and zero expectation
(for instance Theorem 2 and Theorem 4 in \cite{DSzV08} are formulated for some deterministic random walks, but the argument 
also applies to iid random walks). Thus Theorem \ref{thm1} immediately extends to such cases. However, pursuing the most 
general case is not the intent of these short notes. 
\item Theorem \ref{thm1} can be extended to the planar 
Brownian motion by taking local time in a bounded domain, e.g. the unit ball. 
Indeed, the analogue of Theorem \ref{thm:2dimw} for Brownian motion was proved in Section 2 of \cite{DK57},
while the analogue of Lemma \ref{lemma2dim} is a consequence of the well-known exit probabilities
from annuli, see e.g. Theorem 3.18 in \cite{MP10}.
 \item The convergence in Theorem \ref{thm1} can be extended to the interval $[0,T]$ with arbitrary $T>0$. Indeed, 
the definition of $L_n(t)$ and $\mathcal J$ naturally extends to the interval $t \in [0,T]$ and the same proof applies.
As a consequence, the analogue of Theorem \ref{thm1} also holds in $D[0,\infty)$ and M1 topology by general theory (cf. 
Thm 12.9.3 in \cite{W02}).
 \item Note that Theorem \ref{thm1} fails to hold in the usual J1 topology. The main difference between J1 and M1
is that in J1 big jumps have to match, while in M1 one can approximate a big jump by a collection of nearby small jumps.
Clearly, the process $L_n(t)$ has jumps of size $1/ \log n$, many of them close to each other, thus
yielding $O(1)$ jumps in the M1 limit $\mathcal J$. The classical reference on topologies of the Skorokhod
space is \cite{S56} and a recent survey is \cite{W02}. Some interesting examples of convergence in M1 
are discussed in \cite{AT92, BC07}.
 \item 
Since the celebrated paper \cite{DK57} it is well known that the
rescaled occupation time of Markov processes on infinite phase spaces
converges to Mittag-Leffler distribution under mild assumptions
(exponential distribution is Mittag-Leffler of parameter $0$). 
The functional version of this limit theorem - when time is scaled linearly - was proved
in \cite{B71}. For example, the local time process of the one dimensional random walk, 
rescaled by $\sqrt n$, converges to the Mittag-Leffler process of parameter $1/2$ 
(i.e. the local time process of one dimensional Brownian motion).
As it was already pointed out in \cite{B71},
the natural extension of Mittag-Leffler processes to parameter $0$ is degenerate
which corresponds to the degenerate limit of our $N_n$, when rescaled linearly in time.
 \item Proposition \ref{prop1} is connected to the results of \cite{CsFRS99}. 
Let $R(n)$ denote the total length of all excursions except for the
two longest. A variant of our Proposition \ref{prop1} shows that distribution of 
$\log R(n) / \log n$ is close to $UNI$; while \cite{CsFRS99} shows 
that almost surely it is in the interval 
$[1/\log^{1+\varepsilon} \log n ,1-1/\log^{1+\varepsilon} \log n]$
for $n$ large enough. 
Note that in order to prove almost sure results, one has to subtract the two longest excursions 
(and not just the longest one).
Indeed, the walk returns (albeit rarely) to the origin. Thus when a new extra-long excursion is at
birth, it is of comparable size to the former extra-long excursion for a while.

\end{enumerate}

\section*{Acknowledgement}
The main part of our research has been completed in the framework
of the Summer Undergraduate Research Program (SURE)
at CIMS, NYU in 2014 while P.N. was a Courant Instructor at CIMS, NYU. 
The funding of Z.S. from this program is gratefully
acknowledged.


\begin{thebibliography}{99}

\bibitem[AT92]{AT92} Avram, F., Taqqu, M. S., 
Weak convergence of sums of moving averages in the $\alpha$-stable domain of attraction
{\it The Annals of Probability} {\bf 20} 1 483--503 (1992).

\bibitem[BF92]{BF92} Bedford, T., Fisher, A.,
Analogues of the Lebesgue density theorem for fractal sets of reals and integers
{\it Proc. London Math. Soc.} {\bf 64} 95--124 (1992).

\bibitem[BC07]{BC07} Ben Arous, G., $\check{C}$ern\'y, J.,
Scaling limit for trap models on $\mathbb Z^d$
{\it The Annals of Probability} {\bf 35} 6 2356--2384 (2007).

\bibitem[B71]{B71} Bingham, N.H. Limit theorems for occupation times of Markov processes
{\it Zeitschrift Wahrscheinlichkeitstheorie verw. Gebiete} {\bf 17} 1, 1--22 (1971).



\bibitem[CsFRS99]{CsFRS99} Cs\'aki, E., F\"oldes, A., R\'ev\'esz, P., Shi, Z.,
On the Excursions of two-dimensional random walk and Wiener process,
{\it Bolyai Society Mathematical Studies} {\bf 9} 43--58 (1999).

\bibitem[DK57]{DK57} Darling, D.A., Kac, M.:On occupation times for Markoff processes, 
{\it Trans. Amer. Math. Soc.} {\bf 84} 444-458 (1957).

\bibitem[DSzV08]{DSzV08} Dolgopyat, D., Sz\'{a}sz, D., Varj\'{u}, T.: Recurrence Properties of Planar Lorentz Process,
{\it Duke Mathematical Journal} {\bf 142} 241-281 (2008).

\bibitem[ET60]{ET60} Erd\H{o}s, Taylor: Some problems concerning the structure of random walk paths
{\it Acta Math. Acad. Sci. Hungar.} {\bf 11} 137 - 162 (1960).

\bibitem[MP10]{MP10} M\"orters, P., Peres, Y.: Brownian motion, {\it Cambridge Series in Statistical and Probabilistic Mathematics}, (2010)

\bibitem[S56]{S56} Skorokhod, A. V. Limit theorems for stochastic processes. 
Theory Probab. Appl. {\bf 1}, 3, 261--290 (1956).

\bibitem[W02]{W02} Whitt, W., Stochastic-Process Limits. Springer, New York (2002).

\end{thebibliography}
\end{document}